%
%
%
%
%

\documentclass[a4paper,12pt,final]{amsart}

\usepackage{latexsym,amsmath,amsthm,amssymb}
\usepackage{mathrsfs,dsfont}



\newtheorem{definition}{Definition}

\newtheorem{lemma}[definition]{Lemma}
\newtheorem{theorem}[definition]{Theorem}
\newtheorem{corollary}[definition]{Corollary}


\begin{document}

\title[Simplified Proof of the Theorem of Varopoulos \ldots]
{Simplified Proof of the Theorem of Varopoulos in the Commutative Case}

\author{Marco Thill}

\address{bd G.\,-\,D.\ Charlotte 53, \
                 L\ -\,1331 Luxembourg City, \
                 Europe}
                 
\email{math@pt.lu}

\subjclass[2000]{Primary: 46K05; Secondary: 46K10}

\keywords{Varopoulos, commutative, positive linear functional,
bitrace, continuity}

\thanks{Many thanks to Torben Maack Bisgaard for his excellent comments}

\begin{abstract}
We give continuity properties of bitraces on (possibly non-commutative)
Banach $*$-algebras based on the Closed Graph Theorem, leading to
a simplified proof of the Theorem of Varopoulos in the commutative case.
\end{abstract}

\maketitle


\section{Introduction}\label{intro}

The Theorem of Varopoulos \cite{Var} says that a positive linear
functional on a Banach $*$-algebra with continuous involution
and with a bounded one-sided approximate unit is continuous.
Its proof depends on two closely related Factorisation Theorems,
one by Paul J.\ Cohen, and the other by Nicholas Th.\ Varopoulos
himself. We quote them here:

\begin{theorem}[Paul J.\ Cohen \cite{Coh}]\label{Cohen}%
Let $A$ be a Banach algebra with a bounded one-sided approximate
unit. Then every element of $A$ is a product of two elements of $A$.
\end{theorem}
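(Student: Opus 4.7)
The plan is to carry out the classical Cohen iterative construction in the unitisation $A^{\#} := A \oplus \mathbb{C}\mathbf{1}$. Let the bounded one-sided (say left) approximate unit of $A$ be $(e_\lambda)_{\lambda \in \Lambda}$ with $\|e_\lambda\| \leq M$, and fix $a \in A$. The first step is to choose a parameter $\delta \in (0,1)$ small enough that $\delta(1+M) < 1$, so that every single element of the form $(1-\delta)\mathbf{1} + \delta e_\lambda$ is invertible in $A^{\#}$ via the Neumann series, with inverse of norm at most $K := [1 - \delta(1+M)]^{-1}$ uniformly in $\lambda$.

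Next I would inductively select elements $u_n \in \{e_\lambda\}$ and form auxiliary invertible elements $v_n \in A^{\#}$, built from the $u_k$, the scalar identity, and the parameter $\delta$ in such a way that the increments $v_n - v_{n-1}$ have geometrically-decaying norms, forcing $(v_n)$ to converge in $A^{\#}$ to some limit $v$ whose scalar component vanishes, so that in fact $v \in A$. Setting $c_n := v_n^{-1} a$, a resolvent-type identity
\[
c_n - c_{n-1} \;=\; -\, v_n^{-1}\bigl(v_n - v_{n-1}\bigr)\, c_{n-1}
\]
together with the left-approximate-unit property $\lim_\lambda e_\lambda c_{n-1} = c_{n-1}$, applied to the already-fixed element $c_{n-1} \in A$, would permit the choice of $u_n$ to force $\|c_n - c_{n-1}\| < 2^{-n}$. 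Hence $(c_n)$ is Cauchy in $A$, say with limit $c \in A$, and passing to the limit in $v_n c_n = a$ yields the sought-for factorisation $a = vc$ with $v, c \in A$.

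The main obstacle is reconciling the invertibility of $v_n$ with the inductive control of $\|c_n - c_{n-1}\|$. A uniform Neumann-series bound on $\|v_n^{-1}\|$ cannot hold, because the limit $v \in A$ is non-invertible in $A^{\#}$ whenever $A$ is non-unital, so $\|v_n^{-1}\|$ must be permitted to grow without bound. The invertibility of each $v_n$ must therefore be propagated step by step along the induction, using the multiplicative relationship between $v_n$ and $v_{n-1}$ and the freedom to choose $u_n$ close enough to $v_{n-1}$ (in the operator-theoretic sense furnished by the approximate unit applied to a known element at each stage) to keep the relevant auxiliary Neumann series convergent. The delicate combinatorial bookkeeping that interleaves this step-wise invertibility argument with the smallness of $\|c_n - c_{n-1}\|$ is the technical heart of Cohen's original proof.
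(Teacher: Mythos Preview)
The paper does not give a proof of this theorem. Cohen's Factorisation Theorem is merely \emph{quoted} in the Introduction as background, with proofs delegated to Cohen's original paper and to the textbook references cited there; the author explicitly remarks that only the bare factorisation $a=bc$ is needed for the purposes of the paper, not the sharper norm estimates on the factors, and points to simplified proofs of the bare statement in the literature. There is therefore no ``paper's own proof'' to compare your attempt against.

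Your outline is nonetheless a recognisable sketch of the classical Cohen argument in the unitisation $A^{\#}$: the Neumann-series invertibility of $(1-\delta)\mathbf{1}+\delta e_\lambda$, the inductive selection of $u_n$, the pair of sequences $(v_n)$ and $(c_n)=(v_n^{-1}a)$ with $v_nc_n=a$, and passage to the limit --- these are exactly the right ingredients. One comment on your ``main obstacle'' paragraph: in the usual presentations the invertibility of $v_n$ is not something that must be earned by a delicate choice of $u_n$ at each step. If one takes the multiplicative form $v_n=\prod_{k=1}^n\bigl((1-\delta)\mathbf{1}+\delta u_k\bigr)$, then each $v_n$ is automatically invertible with $\|v_n^{-1}\|\le K^n$, a bound that is uniform in the choice of the $u_k$'s; this a~priori growth rate (exponential in $n$, but known before $u_n$ is chosen) is all one needs to pick $u_n$ so that $\|(v_n-v_{n-1})c_{n-1}\|$ is small enough to force $\|c_n-c_{n-1}\|<2^{-n}$. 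The approximate-unit property is used only on the $A$-side, to control $c_n-c_{n-1}$, not to secure invertibility. So the difficulty you describe is genuine, but the resolution is more straightforward than ``step-by-step propagation'' suggests.
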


\begin{theorem}[Nicholas Th.\ Varopoulos \cite{Var}]%
Let $A$ be a Banach algebra with a bounded left approximate unit.
Then for a sequence $(a_n)$ in $A$ converging to $0$, there exists
a sequence $(b_n)$ in $A$ converging to $0$ as well as $c \in A$
with $a_n = c \,b_n$ for all $n$.
\end{theorem}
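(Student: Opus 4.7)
\smallskip
\noindent\textbf{Proof plan.}
The plan is to deduce Varopoulos's factorisation from Cohen's Theorem (Theorem~\ref{Cohen}) applied to a suitable auxiliary Banach algebra built from $A$ and its null sequences. Let $M$ be a bound on the given left approximate unit $(e_\lambda)$ of $A$, and let $c_0(A)$ denote the Banach space of sequences $x=(x_n)$ in $A$ with $\|x_n\|\to 0$, equipped with the supremum norm. I would form the Banach space $B:=A\oplus c_0(A)$, normed by $\|(a,x)\|:=\|a\|+\sup_n\|x_n\|$, and endow it with the multiplication
$$
(a,x)(a',x'):=\bigl(aa',\,(ax'_n)_n\bigr).
$$
Associativity and submultiplicativity are routine, so $B$ is a Banach algebra in which $\{0\}\oplus c_0(A)$ sits as a closed ideal.

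The next step is to verify that the net $E_\lambda:=(e_\lambda,0)$ is a bounded left approximate unit in $B$, of bound $M$. Clearly $\|E_\lambda\|\le M$, and since $E_\lambda(a,x)=(e_\lambda a,(e_\lambda x_n)_n)$, the required convergence $E_\lambda(a,x)\to(a,x)$ reduces to the familiar $e_\lambda a\to a$ in $A$ together with the uniform statement $\sup_n\|e_\lambda x_n-x_n\|\to 0$ whenever $\|x_n\|\to 0$. This uniformity is the only genuine technicality in the argument: given $\varepsilon>0$, choose $N$ so large that $\|x_n\|<\varepsilon/(M+1)$ for $n>N$; the tail indices are then controlled by the uniform bound on $(e_\lambda)$, and the finitely many remaining indices are handled by passing far enough along the approximate unit.

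Finally, Cohen's Theorem applies to $B$ and in particular factorises the element $(0,(a_n))\in B$ as $(0,(a_n))=(c,u)(d,v)$ for suitable $(c,u),(d,v)\in B$. Reading off the $c_0(A)$-coordinate of the product yields $a_n=cv_n$ for every $n$, so setting $b_n:=v_n$ produces the required null sequence in $A$ with $a_n=cb_n$. The whole argument really only hinges on rigging up a Banach algebra from $A$ and $c_0(A)$ that still inherits a bounded left approximate unit; once that is in place, Cohen's Theorem does all the work.
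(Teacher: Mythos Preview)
Your argument is correct: the algebra $B=A\oplus c_0(A)$ with the multiplication $(a,x)(a',x')=(aa',(ax'_n)_n)$ is associative and submultiplicative, the net $(e_\lambda,0)$ is a bounded left approximate unit by exactly the tail/finite-head splitting you describe, and Cohen's Theorem applied to $(0,(a_n))$ yields $a_n=cv_n$ with $v\in c_0(A)$. So the deduction goes through.

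However, there is nothing to compare against: the paper does \emph{not} prove this statement. Theorem~2 is quoted in the introduction as background, with the remark that proofs may be found in \cite{Zel}, \cite{BD}, and \cite{Pal1}; indeed the explicit aim of the paper is to prove the continuity theorem of Varopoulos in the commutative case \emph{without} invoking this factorisation result at all. The paper's own contribution begins with Lemma~\ref{lemma} and uses only Cohen's Theorem.

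It is worth noting a pleasant point of contact, though. The paper remarks that the usual route to Varopoulos's factorisation ``heavily depends on additional bounds on the factors in Cohen's Factorisation Theorem'' which ``are hard to establish''. Your proof sidesteps exactly this difficulty: by passing to the auxiliary algebra $B$, you need only the bare existence statement of Theorem~\ref{Cohen}, with no quantitative control on the factors. In that sense your argument is very much in the spirit of the paper, even if the paper itself chose to bypass Theorem~2 entirely rather than give it an economical proof.
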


Proofs of the above two results can also be found in the following books:
\cite[\S\ 6]{Zel}, \cite[\S\ 11]{BD}, and \cite[Section 5.2]{Pal1}.

Satish Shirali \cite{Shi} extended the Theorem of Varopoulos to the case
of a Banach $*$-algebra with a bounded one-sided approximate unit (and
with a possibly discontinuous involution). (This was non-trivial also because
Varopoulos used the fact that a bounded left approximate unit in a Banach
$*$-algebra with continuous involution gives rise to a bounded right
approximate unit.) Proofs of the Theorem of Varopoulos and Shirali can
also be found in the books \cite[Theorem 11.3.7 pp.\ 1189--1190]{Pal2}
and \cite[\S\ 27]{Thi}.

We shall show how to prove the Theorem of Varopoulos  in the commutative
case without using the more technical Factorisation Theorem of Varopoulos.
This will depend on continuity properties of bitraces on (possibly
non-commutative) Banach $*$-algebras. These continuity properties are
based in a novel way on the Closed Graph Theorem.

The Factorisation Theorem of Varopoulos heavily depends on
additional bounds on the factors in Cohen's Factorisation Theorem.
These additional bounds are hard to establish, and are not needed
for our proof. Furthermore, a number of simpler proofs for Cohen's
Factorisation Theorem without these sharp bounds have been given.
See the paragraph preceding the statement of \cite[Theorem 5.2.2]{Pal1}
and the references given there.

\section{Terminology and Notation}\label{term}

We use the terminology of \cite[Chapter XV, Sections 6 \& 7]{Dieu}.
In the following, let $A$ be a $*$-algebra, and let
$\langle \cdot , \cdot \rangle$ be a \underline{positive Hilbert form} on $A$,
that is, a positive semi-definite sesquilinear form on $A$ such that
\[ \langle ab,c \rangle = \langle b,a^*c \rangle \]
holds for all $a,b \in A$. For example, any positive linear functional $\varphi$
on $A$ induces a positive Hilbert form on $A$ by putting
\[ \langle a,b \rangle := \varphi (b^*a) \qquad (a,b \in A). \]
See \cite[pp.\ 348--349]{Dieu}.

The \underline{isotropic subspace}
\[ I := \{ \,a \in A \,: \,\langle a,b \rangle = 0 \text{ for all } b \in A \,\} \]
is a left ideal. The quotient space
\[ \underline{A} := A / I \]
is a pre-Hilbert space with respect to the inner product
\[ \langle \underline{a} , \underline{b} \rangle := \langle a,b \rangle \qquad (a,b \in A), \]
where $\underline{a} \in \underline{A}$ shall denote the projection of $a \in A$.
Since $I$ is a left ideal, it follows that for $a \in A$ an operator $\pi (a)$ mapping
$\underline{b} \in \underline{A}$ to $\underline{ab} \in \underline{A}$ is well defined.
We denote the completion of $\underline{A}$ by $H$, which is a Hilbert space.

If $A$ is a Banach $*$-algebra, then the operators $\pi (a)$ $(a \in A)$
are bounded, and thus have a continuation to $H$, which we denote
by the same symbol. It follows that the \underline{GNS representation}
\begin{alignat*}{2}
\pi : &\ A &\ \to &\ B(H) \\
 &\ a &\ \mapsto &\ \pi (a)
\end{alignat*}
is bounded. For proofs, see for example \cite[Theorem 22.14 p.\ 83]{Thi}.

One says that $\langle \cdot , \cdot \rangle$ is a \underline{bitrace}, if furthermore
\[ \langle a,b \rangle = \langle b^*,a^* \rangle \]
holds for all $a,b \in A$. One then also has
\[ \langle ba,c \rangle = \langle b,ca^* \rangle \]
for all $a,b,c \in A$, and it is this property that we shall need in lemma \ref{lemma}.
For example, if $\langle \cdot , \cdot \rangle$ is induced by a positive linear functional
$\varphi$, then $\langle\cdot , \cdot \rangle$ is a bitrace if and only if
\[ \varphi (ab) = \varphi (ba) \]
holds for all $a,b \in A$. One then says that $\varphi$ is a \underline{trace}.
See \cite[pp.\ 356--358]{Dieu}.

As usual, let $A^{\,2} := \mathrm{span} \,\{ \,ab \in A : a, b \in A \,\}$.

The pair $( A , \langle \cdot , \cdot \rangle )$ is called a \underline{Hilbert algebra},
if it is a pre-Hilbert space with $A^{\,2}$ dense, if $\langle \cdot , \cdot \rangle$ is a
bitrace, and if the operators $\pi (a)$ $(a \in A)$ all are bounded, cf.\ \cite[p.\ 358]{Dieu}.

In the following, we shall let $A$ be a Banach $*$-algebra, and let $p$ denote the
projection on the closed subspace $M$ of $H$ given by
\[ M := \overline{\mathrm{span}} \ \{ \,\pi (a) \,x \in H \,: \,x \in H,\, a \in A \,\} \subset H. \]

\section{The Proofs}

\begin{lemma}\label{lemma}%
Let $\langle \cdot , \cdot \rangle$ be a bitrace on a Banach $*$-algebra $A$.
The mapping 
\begin{alignat*}{2}
 & A &\ \to &\ H \\
 & a &\ \mapsto &\ p \,\underline{a}
\end{alignat*}
(with notation as above) is bounded.
\end{lemma}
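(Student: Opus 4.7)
The plan is to apply the Closed Graph Theorem to the mapping $T \colon A \to H$, $T(a) := p\,\underline{a}$. Since $A$ is a Banach space and $H$ is a Hilbert space, it suffices to verify that the graph of $T$ is closed: so I would assume $a_n \to 0$ in $A$ and $p\,\underline{a_n} \to y$ in $H$, reducing the task to showing $y = 0$.

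First I would observe that $y \in M$, being a norm limit of elements $p\,\underline{a_n} \in M$. It therefore suffices to verify $\langle y, v \rangle = 0$ for $v$ ranging over a dense subset of $M$, since then $y \perp M$ together with $y \in M$ forces $\|y\|^{2} = 0$. A convenient such dense subset is $\underline{A^{\,2}}$: each generator $\pi(a)\,x$ of $M$ may be approximated by $\pi(a)\,\underline{c_k} = \underline{a c_k}$, using density of $\underline{A}$ in $H$ together with boundedness of $\pi(a)$.

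The heart of the argument is the computation of $\langle y, \underline{bc} \rangle$ for $b, c \in A$. Since $p$ is the orthogonal projection onto $M$ and $\underline{bc} \in M$, one has
\[ \langle p\,\underline{a_n},\ \underline{bc} \rangle \;=\; \langle \underline{a_n},\ \underline{bc} \rangle \;=\; \langle a_n, bc \rangle \;=\; \overline{\langle bc, a_n \rangle} \;=\; \overline{\langle b,\ a_n c^* \rangle}, \]
the last equality being the bitrace identity $\langle bc, a_n \rangle = \langle b, a_n c^* \rangle$. Cauchy--Schwarz then yields $|\langle b, a_n c^* \rangle| \le \|\underline{b}\| \cdot \|\pi(a_n)\,\underline{c^*}\|$, and boundedness of the GNS representation forces $\|\pi(a_n)\| \le \|\pi\|\,\|a_n\| \to 0$. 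Letting $n \to \infty$ yields $\langle y, \underline{bc} \rangle = 0$ for all $b, c \in A$, concluding $y = 0$.

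The main obstacle, and the pivotal move, is the use of the bitrace identity to relocate $a_n$ from a passive position inside the product $bc$ into the active role of an operator $\pi(a_n)$ applied to the fixed vector $\underline{c^*}$; this is precisely where the bitrace hypothesis earns its keep. Without it, one would instead be forced to estimate $\|\underline{b^* a_n}\|$ or similar, which cannot be controlled in general when the involution is not assumed continuous.
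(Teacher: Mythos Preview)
Your proof is correct and follows essentially the same route as the paper's: the Closed Graph Theorem, reduction to vectors of the form $\underline{bc}=\pi(b)\,\underline{c}$ spanning a dense subset of $M$, and the key bitrace identity to move $a_n$ into the operator position $\pi(a_n)$ so that boundedness of the GNS representation gives convergence to zero. The only cosmetic differences are that you conjugate before applying the bitrace identity and spell out the Cauchy--Schwarz bound explicitly, whereas the paper writes $\langle a_n, bc\rangle = \langle a_n c^*, b\rangle = \langle \pi(a_n)\,\underline{c^*}, \underline{b}\rangle \to 0$ directly.
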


\begin{proof}
We shall show that the mapping in question has closed graph.
So let $a_n \to 0$ in $A$, and $p \,\underline{a_n} \to x$ in $H$.
We have to show that $x = 0$. The vector $x$ lies in $M$. It is
thus enough to prove that $\langle \,x , \pi (b) \,\underline{c} \,\rangle =  0$
for all $b, c \in A$.
One calculates
\begin{align*}
& \,\langle \,x , \pi (b) \,\underline{c} \,\rangle
    = \lim _{n \to \infty} \langle \,p \,\underline{a_n} \,, \pi (b) \,\underline{c} \,\rangle
    = \lim _{n \to \infty} \langle \,\underline{a_n} \,, p \,\pi (b) \,\underline{c} \,\rangle \\
= & \lim _{n \to \infty} \langle \,\underline{a_n} \,, \pi (b) \,\underline{c} \,\rangle
    = \lim _{n \to \infty} \langle \,a_n \,, b \,c \,\rangle
    = \lim _{n \to \infty} \langle \,a_n \,c^*, b \,\rangle \\
= & \lim _{n \to \infty} \langle \,\pi (a_n) \,\underline{c^*} \,, \underline{b} \,\rangle  = 0
\end{align*}
by continuity of $\pi$.
\end{proof}

A positive Hilbert form $\langle \cdot , \cdot \rangle$ on a normed $*$-algebra
$(A, | \cdot |)$ is called \underline{bounded} if there exists $\gamma \geq 0$
such that
\[ | \,\langle a , b \rangle \,| \leq \gamma \cdot | \,a \,| \cdot | \,b \,| \]
holds for all $a,b \in A$.

\begin{theorem}
A bitrace $\langle \cdot , \cdot \rangle$ on a Banach $*$-algebra $A$ is
bounded if the corresponding GNS representation is non-degenerate.
The last condition is satisfied, if $( A , \langle \cdot , \cdot \rangle )$ is a
Hilbert algebra.
\end{theorem}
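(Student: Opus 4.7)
\emph{Proof proposal.} The plan is to use Lemma \ref{lemma} to reduce boundedness of the bitrace to boundedness of the map $a \mapsto \underline{a}$, via the Cauchy--Schwarz inequality in $H$, and then to observe that non-degeneracy of $\pi$ (which is automatic in the Hilbert algebra case) forces the projection $p$ to be the identity on $H$.

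First I would recall that $\pi$ being non-degenerate means that $\{\,\pi(a)\,x : a \in A,\, x \in H\,\}$ has dense linear span in $H$; equivalently $M = H$ and $p = \mathrm{id}_{H}$. Under this assumption Lemma \ref{lemma} asserts that $a \mapsto p\,\underline{a} = \underline{a}$ is bounded from $A$ to $H$, so there exists a constant $C \geq 0$ with $\|\,\underline{a}\,\| \leq C\,|\,a\,|$ for every $a \in A$. The Cauchy--Schwarz inequality in $H$ then yields
\[
|\,\langle a,b\rangle\,| \;=\; |\,\langle \underline{a},\,\underline{b}\,\rangle\,| \;\leq\; \|\,\underline{a}\,\|\cdot\|\,\underline{b}\,\| \;\leq\; C^{2}\,|\,a\,|\cdot|\,b\,|,
\]
which is the desired boundedness of the bitrace.

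It remains to verify that a Hilbert algebra yields a non-degenerate GNS representation. In this setting $\langle\cdot,\cdot\rangle$ is an inner product on $A$, so the isotropic subspace $I$ is trivial and $\underline{A} = A$ sits as a dense subspace of the completion $H$. For $b, c \in A$ one has $\pi(b)\,\underline{c} = \underline{bc} = bc$, so $M$ contains every product $bc$ and hence all of $A^{\,2}$. Since $A^{\,2}$ is dense in $A$ in the pre-Hilbert norm and $A$ is dense in $H$, the space $A^{\,2}$ is dense in $H$, forcing $M = H$, i.e.\ non-degeneracy of $\pi$.

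The main obstacle is conceptual rather than technical: the substance of the argument is entirely contained in Lemma \ref{lemma}, and what remains is to trace the definition of non-degeneracy through the projection $p$ (observing that this forces $p = \mathrm{id}_{H}$) and to recognise that the Hilbert algebra axiom ``$A^{\,2}$ is dense in $A$'' is precisely the density statement needed to conclude $M = H$. Once these identifications are made, Cauchy--Schwarz does the rest.
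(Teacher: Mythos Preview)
Your argument is correct and is exactly the derivation the paper has in mind: the theorem is stated immediately after Lemma~\ref{lemma} with no separate proof because non-degeneracy makes $p=\mathrm{id}_H$, so Lemma~\ref{lemma} gives $\|\underline{a}\|\leq C\,|a|$ and Cauchy--Schwarz finishes; your handling of the Hilbert algebra case (using that $A^{\,2}$ is dense in the pre-Hilbert space $A$, hence in $H$) is likewise the intended reading of the definition.
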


From the lemma also follows:

\begin{theorem}
A bitrace on a Banach $*$-algebra $A$ is bounded on $A^{\,2}$.
\end{theorem}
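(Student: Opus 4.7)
The plan is to deduce this theorem directly from Lemma~\ref{lemma} by showing that on $A^{\,2}$ the projection $p$ acts as the identity, so the bound on $a \mapsto p \,\underline{a}$ transfers to a bound on $a \mapsto \underline{a}$, and then Cauchy--Schwarz closes the argument.

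First I would observe that for every $a \in A^{\,2}$ one has $\underline{a} \in M$. Indeed, writing $a = \sum_{i} b_i c_i$ as a finite linear combination of products, we get $\underline{a} = \sum_{i} \underline{b_i c_i} = \sum_{i} \pi(b_i)\,\underline{c_i}$, which by the very definition of $M$ lies in $M$. Consequently $p\,\underline{a} = \underline{a}$ for all $a \in A^{\,2}$.

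Next, Lemma~\ref{lemma} provides a constant $\gamma \geq 0$ such that $\|\,p\,\underline{a}\,\|_H \leq \gamma \cdot |\,a\,|$ for all $a \in A$. Combined with the previous step, this gives $\|\,\underline{a}\,\|_H \leq \gamma \cdot |\,a\,|$ for all $a \in A^{\,2}$. For $a, b \in A^{\,2}$, Cauchy--Schwarz applied in the pre-Hilbert space $\underline{A}$ then yields
\[
 |\,\langle a,b \rangle\,| \;=\; |\,\langle \underline{a}, \underline{b} \rangle\,| \;\leq\; \|\,\underline{a}\,\|_H \cdot \|\,\underline{b}\,\|_H \;\leq\; \gamma^{\,2} \cdot |\,a\,| \cdot |\,b\,|,
\]
which is precisely the boundedness of $\langle \cdot , \cdot \rangle$ restricted to $A^{\,2}$.

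There is essentially no obstacle here: the whole content has been absorbed into the lemma, whose closed-graph argument did the analytic work. The only thing one must notice is the purely algebraic fact that the image of $A^{\,2}$ in $\underline{A}$ already sits inside $M$, so that the projection $p$ appearing in Lemma~\ref{lemma} is invisible on $A^{\,2}$.
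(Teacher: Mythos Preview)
Your proof is correct and is precisely the argument the paper has in mind: the paper merely states ``From the lemma also follows'' without spelling out details, and the natural way to extract the theorem from the lemma is exactly your observation that $\underline{A^{\,2}} \subset M$ (so $p$ is the identity there), followed by Cauchy--Schwarz. There is nothing to add or correct.
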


\begin{theorem}\label{preVarop1}%
A bitrace on a Banach $*$-algebra with a bounded one-sided
approximate unit is bounded.
\end{theorem}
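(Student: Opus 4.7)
The plan is to reduce the statement to the immediately preceding result (boundedness of a bitrace on $A^{\,2}$) by means of Cohen's Factorisation Theorem. Since $A$ has a bounded one-sided approximate unit, Theorem \ref{Cohen} applies and tells us that every element of $A$ is a product of two elements of $A$; in particular, every element of $A$ lies in $A^{\,2} = \mathrm{span}\,\{ab : a,b \in A\}$. Thus $A = A^{\,2}$, and the preceding theorem then immediately gives that the bitrace is bounded on $A^{\,2} = A$, which is what is claimed.

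I expect no real obstacle: the content of the present theorem lies entirely in recognising that Cohen's Factorisation Theorem is the bridge between ``bounded one-sided approximate unit'' and ``$A = A^{\,2}$'', at which point the $A^{\,2}$-boundedness result finishes the job. It is worth noting that Theorem \ref{Cohen} is formulated for either a bounded left or a bounded right approximate unit, which matches precisely the one-sided hypothesis of the present theorem, so no case distinction is needed.

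An alternative, equally short route would be to invoke the other preceding theorem (bitrace with non-degenerate GNS representation is bounded). Cohen's theorem would still do the work: writing $a = bc$ for arbitrary $a \in A$ one has $\underline{a} = \pi(b)\,\underline{c} \in M$, whence $\underline{A} \subseteq M$; since $\underline{A}$ is dense in $H$ by construction, one obtains $M = H$ and therefore $p = \mathrm{id}_H$, i.e.\ non-degeneracy of $\pi$. I would nonetheless prefer the first route, as it appeals directly to the sharper $A^{\,2}$-statement and bypasses the intermediate density argument.
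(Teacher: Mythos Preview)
Your proposal is correct and matches the paper's own proof exactly: the paper simply invokes Cohen's Factorisation Theorem to conclude $A = A^{\,2}$ and then applies the preceding result that a bitrace is bounded on $A^{\,2}$. Your alternative route via non-degeneracy of the GNS representation is also valid, but the paper, like you, takes the direct $A^{\,2}$ route.
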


\begin{proof}
This follows now from the Factorisation Theorem of Cohen
\linebreak (theorem \ref{Cohen} above).
\end{proof}

\begin{theorem}\label{preVarop2}%
Let $A$ be a Banach $*$-algebra with a bounded right \linebreak
approximate unit $(e_i)_{i \in I}$. Let $\langle \cdot , \cdot \rangle$
be a non-zero bitrace on $A$. The corresponding GNS
representation $\pi$ then is cyclic. Furthermore, there exists a cyclic
vector $c$ for $\pi$, such that the positive linear functional
$a \mapsto \langle \pi (a) c, c \rangle$ $(a \in A)$ induces the bitrace
$\langle \cdot , \cdot \rangle$. The vector $c$ can be chosen to be
any adherence point of the net $(\underline{e_i})_{i \in I}$ in
the weak topology.
\end{theorem}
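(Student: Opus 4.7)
The plan is to exploit theorem \ref{preVarop1} so that a bounded approximate unit, once pushed into $H$, possesses a weak adherence point that can serve as a cyclic vector.

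First I would invoke theorem \ref{preVarop1} to deduce that the bitrace is bounded, so that $\|\underline{a}\| \leq \sqrt{\gamma}\,|a|$ for all $a \in A$ and some $\gamma \geq 0$. In particular the net $(\underline{e_i})_{i \in I}$ is bounded in $H$. Since bounded subsets of a Hilbert space are relatively weakly compact, this net admits at least one weak adherence point $c \in H$; fix such a $c$ together with a subnet $(\underline{e_{i_j}})$ converging weakly to $c$.

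The crux of the argument is the identity $\pi(a)c = \underline{a}$ for every $a \in A$. To establish it I would look at $\pi(a)\underline{e_{i_j}} = \underline{a\, e_{i_j}}$ in two ways. Since $(e_i)$ is a right approximate unit, $a\, e_{i_j} \to a$ in $A$, and because $a \mapsto \underline{a}$ is norm-continuous this net converges to $\underline{a}$ in norm, hence weakly. On the other hand $\pi(a)$ is bounded, hence weakly continuous, so the same net converges weakly to $\pi(a)c$. Uniqueness of weak limits forces $\pi(a)c = \underline{a}$. This immediately yields $\pi(A)\,c \supseteq \underline{A}$, which is dense in $H$, so $c$ is a cyclic vector for $\pi$ and, in particular, $\pi$ is cyclic.

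It remains to verify that $\varphi(a) := \langle \pi(a)c, c \rangle$ induces the bitrace. Positivity and linearity are automatic. Using that $\pi$ is a $*$-representation (which is immediate from the defining identity of a positive Hilbert form) together with $\pi(a)c = \underline{a}$, one has $\varphi(b^*a) = \langle \pi(a)c, \pi(b)c\rangle = \langle \underline{a}, \underline{b}\rangle = \langle a, b \rangle$, as required. The main obstacle is really just the weak adherence point step: one needs the boundedness of the bitrace granted by theorem \ref{preVarop1} to know that the net $(\underline{e_i})$ is bounded in $H$, which is what makes the weak compactness argument applicable.
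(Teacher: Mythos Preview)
Your proof is correct and follows essentially the same route as the paper: invoke theorem \ref{preVarop1} to bound $(\underline{e_i})$, extract a weak adherence point $c$, establish $\pi(a)c = \underline{a}$, and read off both cyclicity and $\varphi(b^*a)=\langle a,b\rangle$. The only cosmetic differences are that the paper verifies $\pi(a)c=\underline{a}$ by testing against $\underline{b}$ rather than invoking weak continuity of $\pi(a)$, and obtains $\varphi(b^*a)=\langle a,b\rangle$ via polarisation from $\varphi(a^*a)=\langle a,a\rangle$ rather than using $\pi(b^*)=\pi(b)^*$ directly.
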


(We give the proof here for the sake of staying self-contained.
A more general result is given in \cite[Theorem 27.4]{Thi}.
See also \cite[Theorem 27.5]{Thi}.)

\begin{proof}
By boundedness of $\langle \cdot , \cdot \rangle$ (theorem
\ref{preVarop1}), the net $(\underline{e_i})_{i \in I}$ is bounded
in $H$, and therefore has an adherence point in the weak topology
on $H$. Let $c$ be any such adherence point. By going over to a
subnet, we can assume that $(\underline{e_i})_{i \in I}$ converges
weakly to $c$. It shall be shown that $\pi (a) c = \underline{a}$
for all $a \in A$, which implies that $c$ is cyclic for $\pi$. It is
enough to show that for $b \in A$, one has
$\langle \pi (a)c, \underline{b} \rangle
= \langle \underline{a}, \underline{b} \rangle$. So one calculates
\begin{align*}
 & \langle \pi (a)c, \underline{b} \rangle
= \langle c, \pi (a)^* \underline{b} \rangle
= \lim _{i \in I} \,\langle \underline{e_i}, \pi (a)^* \underline{b} \rangle \\
= \,&\lim _{i \in I} \,\langle \pi (a)\underline{e_i}, \underline{b} \rangle
= \lim _{i \in I} \,\langle a e_i, b \rangle
= \langle a, b \rangle = \langle \underline{a}, \underline {b} \rangle,
\end{align*}
where we have used the continuity of $\langle \cdot, \cdot \rangle$.
Consider now the positive linear functional $\varphi$ on $A$ defined by
\[ \varphi (a) := \langle \pi (a) c, c \rangle \qquad (a \in A). \]
We have
\[ \varphi (a^*a) = \langle \pi (a^*a) c, c \rangle = {\| \,\pi (a)c \,\| \,}^2
= {\| \,\underline{a} \,\| \,}^2
= \langle \underline{a}, \underline{a} \rangle
= \langle a, a \rangle \]
whence, after polarisation,
\[ \langle a, b \rangle = \varphi(b^*a) \qquad (a, b \in A), \]
that is, $\varphi$ induces the bitrace $\langle \cdot , \cdot \rangle$.
\end{proof}

A positive linear functional $\varphi$ on a normed $*$-algebra
$(A, | \cdot |)$ is called \underline{representable}, if there exists
a non-degenerate continuous representation $\pi$ of $(A, | \cdot |)$
in a Hilbert space, and a vector $c$ such that
\[ \varphi (a) = \langle \pi(a) c, c \rangle \]
holds for all $a \in A$. The functional $\varphi$ then is continuous
by the con\-tinuity assumption on $\pi$.

It is known that a positive linear functional $\varphi$ on a
Banach $*$-algebra $A$ is representable, if and only if it
has \underline{finite variation}, in the sense that there exists
$\gamma \geq 0$ such that
\[ {| \,\varphi (a) \,| \,}^2 \leq \gamma \,\varphi (a^*a) \]
holds for all $a \in A$. See for example \cite[\S\S\ 24--25]{Thi}.

\begin{theorem}
A trace on a Banach $*$-algebra with a bounded one-sided
approximate unit has finite variation. Hence it is representable,
and thus also continuous.
\end{theorem}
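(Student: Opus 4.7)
The plan is to exhibit $\varphi$ directly in the representable form $\varphi(a) = \langle \pi(a) c, c \rangle$, from which finite variation and continuity follow at once by the known characterisation recalled just above the theorem. Let $\langle a, b \rangle := \varphi(b^* a)$ denote the bitrace induced by $\varphi$; by theorem \ref{preVarop1} it is bounded.

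In the case where the induced bitrace is non-zero, theorem \ref{preVarop2} yields a cyclic vector $c$ for the GNS representation $\pi$ such that the positive functional $\varphi'(a) := \langle \pi(a) c, c \rangle$ also induces $\langle \cdot, \cdot \rangle$. Unwinding this gives $\varphi(b^* a) = \varphi'(b^* a)$ for all $a, b \in A$. The decisive step is now that by Cohen's Factorisation Theorem \ref{Cohen} applied to $a^*$ (write $a^* = uv$, hence $a = v^* u^*$), every element of $A$ has the form $b^* c$, so $\varphi = \varphi'$ identically on $A$. Hence $\varphi$ is representable, and therefore has finite variation and is continuous. The degenerate case where the bitrace vanishes is disposed of in the same way: $\varphi(b^* a) = 0$ for all $a, b$ together with Cohen's theorem forces $\varphi \equiv 0$, which trivially has finite variation.

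The main obstacle is that theorem \ref{preVarop2} as stated requires a bounded \emph{right} approximate unit, whereas the hypothesis allows a left one. For the left case the proof of theorem \ref{preVarop2} must be adapted: where that proof uses $a e_i \to a$ to conclude $\langle a e_i, b \rangle \to \langle a, b \rangle$, one instead invokes the trace property to write $\varphi(b^* a e_i) = \varphi(e_i b^* a)$, factorises $b^* a = d^* e$ by Cohen, and then exploits $e_i d^* \to d^*$ together with boundedness of the bitrace to obtain the same limit. With this replacement the argument of theorem \ref{preVarop2} goes through unchanged, and the proof above is complete.
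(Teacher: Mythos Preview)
Your argument is correct. For the right-approximate-unit case it coincides with the paper's proof: theorem~\ref{preVarop2} produces $\varphi'(a)=\langle\pi(a)c,c\rangle$ inducing the same bitrace, and Cohen's theorem forces $\varphi=\varphi'$ on all of $A$. (You are also right to isolate the degenerate case $\langle\cdot,\cdot\rangle=0$, which the paper passes over in silence.)

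Where you differ is in the left-approximate-unit case. The paper does not re-open the proof of theorem~\ref{preVarop2}; instead it passes to the opposite algebra $A^{\mathrm{op}}$ with multiplication $(a,b)\mapsto ba$, observes that a bounded left approximate unit in $A$ becomes a bounded right one in $A^{\mathrm{op}}$, and that $\varphi$ remains a trace there (indeed $\varphi(a\cdot_{\mathrm{op}}b)=\varphi(ba)=\varphi(ab)$). The right-hand case then gives finite variation for $\varphi$ on $A^{\mathrm{op}}$, i.e.\ $|\varphi(a)|^{2}\le\gamma\,\varphi(aa^{*})$, and the trace identity $\varphi(aa^{*})=\varphi(a^{*}a)$ returns finite variation on $A$. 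This device is shorter and avoids touching the internals of theorem~\ref{preVarop2}. Your route---pushing the trace property into the limit computation $\langle ae_{i},b\rangle\to\langle a,b\rangle$---also works, and has the merit of showing directly that the GNS representation is cyclic for a left approximate unit as well. Two small remarks on your sketch: the factorisation $b^{*}a=d^{*}e$ ``by Cohen'' is superfluous at that spot, since $b^{*}a$ is already in the required form with $d=b$, $e=a$; and to pass from $\varphi(e_{i}b^{*}a)$ to an expression governed by boundedness of the bitrace you need one further application of the trace identity, namely $\varphi(e_{i}b^{*}a)=\varphi(a\,e_{i}b^{*})=\langle e_{i}b^{*},a^{*}\rangle$, before invoking $e_{i}b^{*}\to b^{*}$.
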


\begin{proof}
Let $\varphi$ be a trace on a Banach $*$-algebra $A$ with a
bounded one-sided approximate unit. In case of a bounded
right approximate unit, it follows from theorem \ref{preVarop2}
and the Factorisation Theorem of Cohen (theorem \ref{Cohen}
above), that $\varphi$ is representable. Then $\varphi$ has 
finite variation and is Hermitian in the sense that
$\varphi (a^*) = \overline{\varphi (a)}$ for all $a \in A$. In case
of a bounded left approximate unit, one changes the multiplication
in $A$ to $(a,b) \mapsto ba$, and one applies the preceding
case of a bounded right approximate unit. Using the fact that
$\varphi$ turns out to be Hermitian, it follows again that
$\varphi$ has finite variation.
\end{proof}

\begin{corollary}
A positive linear functional on a commutative Banach \linebreak
$*$-algebra with a bounded one-sided approximate unit has
finite variation. Hence it is representable, and thus also continuous.
\end{corollary}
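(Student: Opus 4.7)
The plan is to reduce the corollary immediately to the preceding theorem about traces. The crucial observation is that commutativity of $A$ forces every positive linear functional on $A$ to be a trace in the sense defined earlier: if $ab = ba$ for all $a, b \in A$, then trivially $\varphi(ab) = \varphi(ba)$ for every linear functional $\varphi$, so the defining condition of a trace is satisfied for free. Thus the hypothesis of the corollary is a special case of the hypothesis of the preceding theorem.

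Once this one-line verification is made, I would simply invoke the preceding theorem: a trace on a Banach $*$-algebra with a bounded one-sided approximate unit has finite variation, hence is representable, hence is continuous. There is nothing further to check because the extra assumption in the corollary (commutativity) was used only to promote $\varphi$ to a trace; all three conclusions (finite variation, representability, continuity) then transfer verbatim.

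There is no real obstacle in this argument; the only conceptual point is the tautological promotion of a positive linear functional to a trace in the commutative setting. What the corollary is really doing is highlighting the contribution advertised in the introduction: the full chain lemma \ref{lemma} $\Rightarrow$ boundedness of the induced bitrace (via the closed graph theorem) $\Rightarrow$ cyclicity and representability (theorem \ref{preVarop2}) $\Rightarrow$ continuity, combined only with Cohen's Factorisation Theorem \ref{Cohen}, yields the commutative Varopoulos Theorem without recourse to the more technical Factorisation Theorem of Varopoulos.
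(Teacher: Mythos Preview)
Your proposal is correct and matches the paper's intended derivation: the corollary is stated without proof immediately after the theorem on traces, precisely because in a commutative algebra every positive linear functional trivially satisfies $\varphi(ab)=\varphi(ba)$ and is therefore a trace. Your summary of the underlying chain of results is also accurate.
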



\begin{thebibliography}{9}
\bibitem{BD} F.\ F.\ Bonsall, J.\ Duncan. \emph{Complete Normed
Algebras,} Ergebnisse der Mathematik und ihrer Grenzgebiete,
Band 80, Springer-Verlag, Berlin Heidelberg New York, 1973.
\bibitem{Coh} P.\ Cohen, Factorization in group algebras,
\emph{Duke Math.\ J.,} 26, pp.\ 199--205, 1959.
\bibitem{Dieu} J.\ Dieudonn\'{e}. \emph{Treatise on Analysis,} vol.\ 2,
enlarged and corrected printing, Academic Press, 1976.
\bibitem{Pal1} Th.\ W.\ Palmer. Encyclopedia of Mathematics and its
Applications, vol.\ 49: \emph{Algebras and Banach Algebras,}
Cambridge University Press, 1994.
\bibitem{Pal2} Th.\ W.\ Palmer. Encyclopedia of Mathematics and its
Applications, vol.\ 79: \emph{Banach algebras and the General Theory
of $*$-Algebras,} Cambridge University Press, 2001.
\bibitem{Shi} S.\ Shirali. Representability of positive functionals,
\emph{J.\ London Math.\ Soc.,} 2 (3), pp.\ 145--155, 1971.
\bibitem{Thi} M.\ Thill. \emph{Introduction to Normed $*$-Algebras and their
Representations, 5th ed.,} arXiv:math/0701306v2 [math.RT], 2007.
\bibitem{Var} N.\ Th.\ Varopoulos. Sur les formes positives d'une
alg\`{e}bre de Banach, \emph{Comptes Rendus Acad.\ Sc.\ Paris,}
tome 258, Groupe 1, pp.\ 2465--2467, 1964.
\bibitem{Zel} W.\ \.{Z}elazko. \emph{Banach Algebras,} Polish Scientific
Publishers and Elsevier Publishing Company, 1973.
\end{thebibliography}
\end{document}